\newcommand{\JG}[1]{{\color{Green}JG: #1}}
\tikzstyle{block}=[draw opacity=0.7,line width=1.4cm]
\newenvironment{proof}[1][Proof]%
  {\smallskip\par\noindent\textbf{#1\,:\ }}%
  {\hspace*{\fill} \rule{6pt}{6pt}\smallskip}
\newenvironment{proof*}[1][Proof]%
  {\medskip\par\noindent\textbf{#1\,:\ }}%
\title{\LARGE \bf High-Resolution Modeling of the Fastest First-Order Optimization Method for Strongly Convex Functions}
\author{Boya Sun, Jemin George and Solmaz Kia
}
\newcommand{\real}{{\mathbb{R}}} \newcommand{\reals}{{\mathbb{R}}}
\newcommand{\realpositive}{{\mathbb{R}}_{>0}}
\newcommand{\argmin}{\operatorname{argmin}}
 \newcommand{\boxend}{\hfill \ensuremath{\Box}}
\newcommand{\solmaz}[1]{{\color{red}#1}}
\newcommand{\Boya}[1]{{\color{brown}#1}}
\newtheorem{thm}{Theorem}[section]
\newtheorem{rem}{Remark}[section]
\newtheorem{lem}{Lemma}[section]
\newcommand{\oprocendsymbol}{\hbox{$\bullet$}}
\newcommand{\oprocend}{\relax\ifmmode\else\unskip\hfill\fi\oprocendsymbol}
\begin{document}
\maketitle
\thispagestyle{empty}
\pagestyle{empty}

\begin{abstract}
Motivated by the fact that the gradient-based optimization algorithms can be studied from the perspective of limiting ordinary differential equations (ODEs), here we derive an ODE representation of the accelerated triple momentum (TM) algorithm. For unconstrained optimization problems with strongly convex cost, the TM algorithm has a proven faster convergence rate than the Nesterov's accelerated gradient (NAG) method but with the same computational complexity. We show that similar to the NAG method to capture accurately the characteristics of the TM method, we need to use a high-resolution modeling to obtain the ODE representation of the TM algorithm. We use a Lyapunov analysis to investigate the stability and convergence behavior of the proposed high-resolution ODE representation of the TM algorithm. We show through this analysis that this ODE model has robustness to deviation from the parameters of the TM algorithm. We compare the rate of the ODE representation of the TM method with that of the NAG method to confirm its faster convergence. Our study also leads to a tighter bound on the worst rate of convergence for the ODE model of the NAG method. Lastly, we discuss the use of the integral quadratic constraint (IQC) method to establish an estimate on the rate of convergence of the TM algorithm. A numerical example demonstrates our results.
\end{abstract}

\section{Introduction}\label{sec::Intro}

During the past decade, we have witnessed a surge in the design of first-order gradient descent algorithms with parallel/decentralized/distributed structure that is intended to address the optimization problems that arise in large-scale machine learning with stringent computation/communication/storage requirements~\cite{AN-AO:09,BJ-MR-MJ:09,SB-NP-EC-BP-JE:10,MZ-SM:09c,JD-AA-MW:12,SSK-JC-SM:15-auto,SSK:17}. However, in many of the applications involving large-scale optimizations such as operational decision-making for networked systems, there is a need for real-time adjustment of the system’s response/decision to the present situation. Therefore, besides the need for efficiency in resource (computation/communication/storage) management, fast converging optimization algorithms for large-scale problems are now more and more in demand.

As it has been known in the classical optimization literature, improvement to the rate of convergence of optimization algorithms within a first-order framework can be obtained through methods such as quasi-Newton~\cite{DB2019,DGL2016},  Polyak's heavy-ball~\cite{POLYAK19641,EG-HRF:15}, and Nesterov's accelerated gradient (NAG)~\cite{YN:08,nesterov2013introductory} methods. Among these methods, because of its simple structure and its global convergence guarantees for convex objective functions, NAG has received much attention in the optimization and machine learning community. However, the quest for alternative fast converging first-order optimization algorithms is still an ongoing research topic. Recently, a new accelerated gradient-based method called the Triple Momentum (TM) method, which has the same computational complexity as the NAG method but with a proven faster convergence rate, was proposed in~\cite{Scoy18}. Our objective in this paper is to obtain a high-resolution continuous-time representation for the TM method and study its stability and convergence via control theoretic tools.

ODE representation and its analysis for optimization algorithms in the continuous-time domain have a long history going back to~\cite{KJA-LH-HU:58}; more discussions can be found in~\cite{UH-JM:96,Schropp2000, Jongen2004, Shikhman2009, helmke2012optimization}. Continuous-time modeling comes with ease in theoretical analysis via powerful control theoretic tools such as Lyapunov analysis, perturbation theory, and the integral quadratic constraint (IQC) methods. Also, the continuous-time perspective provides intuition to design new algorithms, especially arriving at distributed algorithms in a systematic way from centralized solutions. Furthermore, the convergence analysis of several gradient-based Markov Chain Monte Carlo sampling schemes relies on the continuous-time approximation of such algorithms \cite{cheng2018convergence, ma2019sampling}.  Therefore, recently, ODE modeling has regained popularity to address the need to design new distributed gradient descent based optimization algorithms~\cite{JW-NE:11,JL-CYT:12,SSK-JC-SM:15-auto,DV-FZ-AC-GP-LS:15}, as well as to analyze the new accelerated optimization algorithms~\cite{Boyd:16,2016arXiv161102635W,AW-ACW-MIJ:16,BS:18,Guilherme2019arXiv}. In \cite{Boyd:16}, a second-order ODE is presented as the limit of the NAG method. The connection between ODEs and discrete optimization algorithms is further strengthened in \cite{2016arXiv161102635W} by establishing an equivalence between the estimate sequence technique and Lyapunov function techniques. In \cite{AW-ACW-MIJ:16}, the authors propose a variational, continuous-time framework for understanding accelerated methods and show that there is a Lagrangian functional that generates a large class of accelerated methods in continuous time. NAG method and many of its generalizations can be viewed as a systematic way to go from the continuous-time curves generated by the Lagrangian functional to a family of discrete-time accelerated algorithms~\cite{AW-ACW-MIJ:16}. An ODE-based analysis of mirror descent given in \cite{KricheneNIPS2016} delivers new insights into the connections between acceleration and constrained optimization, averaging, and stochastic mirror descent. A deeper insight into the acceleration phenomenon via high-resolution ODE representation of various first-order methods is presented in \cite{BS:18}. These high-resolution ODEs are shown to permit a general Lyapunov function framework for convergence analysis in both continuous and discrete time~\cite{BS:18}. Finally, in \cite{Guilherme2019arXiv}, the authors show that different types of proximal optimization algorithms based on fixed-point iteration can be derived from the gradient flow by using splitting methods~for~ODEs.

The connection between ODE representation of optimization algorithms and their discrete-time counterpart is often established by taking the step size of the discrete-time algorithm to zero and deriving a limiting ODE using first-order derivatives modeling. This approach works well for gradient descent and Newton algorithms (thus obtaining $\dot{x}=-\nabla f(x)$ and $\dot{x}=-\nabla^2 f(x)^{-1}\nabla f(x)$ from $x(k+1)=x(k)-s\nabla f(x)$ and $x(k+1)=x(k)-s\nabla^2 f(x(k))^{-1}\nabla f(x(k))$, respectively, where $s$ is the step size). However, recent literature has shown that first-order ODE modeling of accelerated algorithms such as the Polyak's heavy-ball and NAG methods fails to capture the true behavior of these algorithms~\cite{BS:18}. In fact, it has been shown that the first-order ODE representation cannot differentiate between these two algorithms since it yields an identical limiting equation for both. Recent literature, therefore, has looked at second-order ODE representation of these algorithms~\cite{Boyd:16,BS:18}. These high-resolution ODEs are more accurate since they better capture the characterizations of the discrete-time accelerated methods in their continuous-time counterpart representations. 

In this paper, we derive a second-order ODE representation for the accelerated TM method and show that the high-resolution ODE is able to accurately capture the characterizations of the  TM method. For clarity, hereafter we refer to the TM method of~\cite{Scoy18} as the discrete-time TM. We present a Lyapunov analysis to study the stability and convergence behavior of the resulted ODE TM representation. We use our Lyapunov analysis to show that the TM method has robustness with respect to deviation from its parameters. We also use our framework to estimate the rate of convergence of the TM algorithm and compare it to the NAG method, which confirms its faster convergence. Our work also leads to a tighter estimate on the rate of convergence of the ODE representation of the NAG method. We also present an IQC framework to establish a bound on the rate of convergence of the algorithm. Using a numerical example, we show the accuracy of our second-order ODE representation in capturing the accelerated behavior of the TM method and its faster convergence over the high-resolution ODE representation of the NAG method given in~\cite{BS:18}.

\emph{Notations}: $\reals$ and $\realpositive$ are the set of real and positive real numbers. ${A}^\top$ is the transpose of matrix ${A}$.
We let  ${0}_{n}$  denote the vector of $n$ zeros and 
$I_n$ denote the $n\times n$ identity~matrix.  When clear from the context, we do not
specify the matrix dimensions. For a vector $x\in\reals^n$, $\|{x}\|=\sqrt{{x}^\top{x}}$ is the standard Euclidean norm. The gradient of $f: \reals^d\to\reals$, is denoted by $\nabla f({x})$.  The following relations hold for a differentiable function $f: \reals^d\to\reals$ that is $M$-strongly convex, $M\in\realpositive$, over $\real^d$ ,  
\begin{subequations}\label{eq::MCon}
\begin{align}
   &\!\!\!f(\mathsf{y})\!-\!f(\mathsf{x})\!\leq\!\nabla f(\mathsf{x})\!^\top\!(\mathsf{y}\!-\!\mathsf{x})\!+\!\frac{1}{2M}\|\nabla f(\mathsf{y})\!-\!\nabla f(\mathsf{x})\|^2\!,\label{eq::MCon-1}\\
   &\!\!\!M\|\mathsf{y}-\mathsf{x}\|^2\leq  ({\mathsf{y}}-{\mathsf{x}})^\top(\nabla f({\mathsf{y}})-\nabla f({\mathsf{x}})),\label{eq::MCon-2}\\
  & \!\!\!M\|\mathsf{y}-\mathsf{x}\|\leq  \|\nabla f({\mathsf{y}})-\nabla f({\mathsf{x}})\|,\label{eq::MCon-3}
\end{align}
\end{subequations}
for any ${\mathsf{x}},{\mathsf{y}}\in \real^d$~\cite{XZ:18}. 
When $\nabla f:\mathbb{R}^d\to \mathbb{R}^d$ of a convex function $f: \real^d\to\real$ is
$L$-Lipschitz continuous,~$L\in\realpositive$, i.e., $\|\nabla f(\mathsf{y})-\nabla f(\mathsf{x})\|\leq L\|\mathsf{y}-\mathsf{x}\|$, we have
\begin{subequations}\label{eq::Lip}
\begin{align}
   &\!\!\! f(\mathsf{y})-f(\mathsf{x})\leq\nabla f(\mathsf{x})^\top\!(\mathsf{y}-\mathsf{x})\!+\!\frac{L}{2}\|\mathsf{y}\!-\!\mathsf{x}\|^2,\label{eq::Lip-2}\\
 & \!\!\!  f(\mathsf{y})\!-\!f(\mathsf{x})\geq\nabla f(\mathsf{x})^\top\!(\mathsf{y}-\mathsf{x})\!+\!\frac{1}{2L}\|\nabla f(\mathsf{y})\!-\!\nabla f(\mathsf{x})\|^2\!,\label{eq::Lip-3}
\end{align}
\end{subequations}
for all $\mathsf{x},\mathsf{y} \in \mathbb{R}^d\times \mathbb{R}^d$~\cite{XZ:18}.
We represent the class of differentiable and $M$ strongly convex functions whose gradient is L-Lipschitz with $\mathcal{S}_{M,L}$.

\section{Problem definition}\label{sec::Prob_Def}
Consider 
\begin{equation}\label{Eqn:Opt}
    \mathsf{x}^\star=\argmin\limits_{x\in \mathbb{R}^n}\, f(x),
\end{equation}
where $f:\mathbb{R}^n\rightarrow \mathbb{R}$ and $f\in\mathcal{S}_{L,M}$. We assume that $\mathsf{x}^\star$ exists and is reachable. The minimizer of this optimization problem is specified as follows.

\begin{lem}[Minimizer of~\eqref{Eqn:Opt}~\cite{DB2019}] Consider optimization problem~\eqref{Eqn:Opt}. A point $\mathsf{x}^\star \in \mathbb{R}^n$ is a unique solution of~\eqref{Eqn:Opt} if and only if
$\nabla f(\mathsf{x}^\star)=0$.
\end{lem}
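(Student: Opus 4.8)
The plan is to prove the biconditional (together with uniqueness) by handling the two implications separately, relying on the first-order characterizations of strong convexity already recorded in~\eqref{eq::MCon}. Note first that since $f$ is differentiable and strongly convex on all of $\mathbb{R}^n$, the objective is smooth and coercive, so a minimizer exists; the content of the lemma is the characterization by the stationarity condition and the fact that such a point is unique.

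For the necessity direction I would invoke the standard first-order (Fermat) optimality condition. Suppose $\mathsf{x}^\star$ is a solution of~\eqref{Eqn:Opt}. Because the minimization is unconstrained and $f$ is differentiable, for every direction $\mathsf{d}\in\mathbb{R}^n$ the scalar function $t\mapsto f(\mathsf{x}^\star+t\mathsf{d})$ attains a minimum at $t=0$, so its derivative there, namely $\nabla f(\mathsf{x}^\star)^\top\mathsf{d}$, must vanish. Since $\mathsf{d}$ is arbitrary, this forces $\nabla f(\mathsf{x}^\star)=0$.

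For sufficiency and uniqueness, suppose $\nabla f(\mathsf{x}^\star)=0$ and let $\mathsf{y}\in\mathbb{R}^n$ be arbitrary. I would parametrize the segment $\mathsf{z}_t=\mathsf{x}^\star+t(\mathsf{y}-\mathsf{x}^\star)$ for $t\in[0,1]$ and use the fundamental theorem of calculus to write
\[
 f(\mathsf{y})-f(\mathsf{x}^\star)=\int_0^1 \nabla f(\mathsf{z}_t)^\top(\mathsf{y}-\mathsf{x}^\star)\,\mathrm{d}t.
\]
Applying the strong-monotonicity bound~\eqref{eq::MCon-2} to the pair $(\mathsf{z}_t,\mathsf{x}^\star)$ and using $\mathsf{z}_t-\mathsf{x}^\star=t(\mathsf{y}-\mathsf{x}^\star)$ together with $\nabla f(\mathsf{x}^\star)=0$ yields $\nabla f(\mathsf{z}_t)^\top(\mathsf{y}-\mathsf{x}^\star)\geq M t\,\|\mathsf{y}-\mathsf{x}^\star\|^2$ for each $t$. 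Substituting this into the integral gives the quadratic growth bound
\[
 f(\mathsf{y})\geq f(\mathsf{x}^\star)+\tfrac{M}{2}\|\mathsf{y}-\mathsf{x}^\star\|^2 .
\]
Hence $\mathsf{x}^\star$ is a global minimizer, and since the inequality is strict whenever $\mathsf{y}\neq\mathsf{x}^\star$, no other point can attain the minimum, so the minimizer is unique.

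I do not anticipate a genuine obstacle, as this is a classical result; the only step that warrants a little care is making the integration-against-monotonicity argument rigorous. This can be bypassed entirely by instead citing the defining quadratic lower bound $f(\mathsf{y})\geq f(\mathsf{x})+\nabla f(\mathsf{x})^\top(\mathsf{y}-\mathsf{x})+\tfrac{M}{2}\|\mathsf{y}-\mathsf{x}\|^2$ of $M$-strong convexity and evaluating it at $\mathsf{x}=\mathsf{x}^\star$, after which uniqueness again follows from strictness of the bound for $\mathsf{y}\neq\mathsf{x}^\star$.
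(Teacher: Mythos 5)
Your argument is correct and is the standard one; the paper itself supplies no proof for this lemma, deferring entirely to the cited reference, so there is nothing in the text to diverge from. Both directions (Fermat's rule for necessity; the quadratic growth bound $f(\mathsf{y})\geq f(\mathsf{x}^\star)+\tfrac{M}{2}\|\mathsf{y}-\mathsf{x}^\star\|^2$ obtained from~\eqref{eq::MCon-2} for sufficiency and uniqueness) are handled correctly, and since $f\in\mathcal{S}_{M,L}$ has a Lipschitz, hence continuous, gradient, the integration step you flag as needing care is unproblematic.
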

In what follows, we let 
\begin{align}\label{eq::kappa_rho}
    \kappa=\frac{L}{M},\quad \rho=1-\frac{1}{\sqrt{\kappa}}.
\end{align}
We refer to $\kappa$ as the condition number of the cost function~$f$.

\subsection{Discrete-time TM Method}

Here we consider the TM method, proposed in~\cite{Scoy18} as the fastest known globally convergent first-order method for solving strongly convex optimization problems. The TM method is an accelerated gradient-based optimization algorithm given as 
\begin{subequations}\label{methods}
\begin{align}
\epsilon_{k+1}&=(1+\beta)\epsilon_k-\beta\epsilon_{k-1}-\alpha\nabla f(y_k),\label{recur1}\\
y_k&=(1+\gamma)\epsilon_k-\gamma\epsilon_{k-1},\label{recur2}\\
x_k&=(1+\delta)\epsilon_k-\delta\epsilon_{k-1},\label{recur3}
\end{align}
\end{subequations}
where the algorithm parameters are given as (recall~\eqref{eq::kappa_rho})
\begin{align}\label{eq::TM_param}
(\alpha, \beta, \gamma, \delta ) = \left(\frac{1+\rho}{L}, \frac{\rho^2}{2-\rho}, \frac{\rho^2}{(1\!+\!\rho)(2\!-\!\rho)}, \frac{\rho^2}{1\!-\!\rho^2} \right),
\end{align}
and 
$\epsilon_0, \epsilon_{-1} \in \mathbb{R}^n$ are the initial conditions, $x \in \mathbb{R}^n$ is the output. 
The TM method has the same numerical complexity as the NAG method but converges faster. In \cite{Scoy18}, it is shown that starting from any initial conditions $\epsilon_0, \epsilon_{-1} \in \mathbb{R}^n$, the trajectories of $x_k, y_k,\epsilon_k$ converge to $\mathsf{x}^\star$ with the same rate but the convergence error of output $x$ is smaller. We observe the same trend in the high-resolution ODE representation of the TM method; see Section~\ref{sec:num} for numerical examples.

Our objective in this paper is to derive a high-resolution ODE representation of the TM algorithm that accurately captures the performance characteristics of its discrete-time counterpart and establish its formal convergence guarantees using the Lyapunov stability analysis. To facilitate our discussions given next, we define a function $\mu(\alpha,\beta) : \realpositive \times \mathbb{R} \rightarrow \realpositive$ (or simply $\mu$) as
\begin{equation}\label{Eqn:mu}
\mu(\alpha,\beta)=\left(\frac{1-\beta}{\sqrt{\alpha}(1+\beta)}\right)^2.    
\end{equation} 
Using the parameter relations given in \eqref{eq::kappa_rho} and \eqref{eq::TM_param} for the TM method, $\mu=\mu(\alpha,\beta)$ can also be written as
 \begin{align}\label{eq::mu_kappa_L}
&\mu = \frac{(9\,\kappa^{2}\sqrt{\kappa}-6\,\kappa^2+\kappa\sqrt{\kappa})L}{8\,\kappa^{3}\sqrt{\kappa}-12\,\kappa^3+14\,\kappa^2\sqrt{\kappa}-9\,\kappa^2+4\,\kappa\sqrt{\kappa}-\kappa}.
 \end{align}
 As shown in Fig.~\ref{Fig::mu}, the maximum value of $\mu$ is $L$, which~is attained at  $\kappa\!=\!1$. When $\kappa\!\to\!\infty$, $\mu\!\to\!0$. We can also show~that 
 \begin{align}\label{eq::mu-L}
      \mu(\alpha,\beta) \in (0,L].
 \end{align}
Replacing $L$ with $\kappa M$ in~\eqref{Eqn:mu}, we can also show that
  \begin{align}\label{eq::mu_bound_M}
     \mu(\alpha,\beta) \in [M,1.3661M].
 \end{align}

\begin{rem}[Role of parameter $\mu$]\rm{
Parameter $\mu$ plays a vital role in the analysis of the high-resolution ODE representation of the TM method. Also note that after substituting appropriate $\alpha$ and $\beta$ values into \eqref{Eqn:mu}, we obtain $\mu_{_{\text{NAG}}} \textnormal = M$ for NAG method while $\mu \geq M$ for the TM method. Therefore the parameter $\mu$ also plays an important role when comparing the convergence rate between the high-resolution NAG and TM methods. }\boxend
\end{rem}
\begin{figure}[t]
  \centering  
  \includegraphics[trim=5pt 30pt 0pt 30pt, clip,scale=0.73]{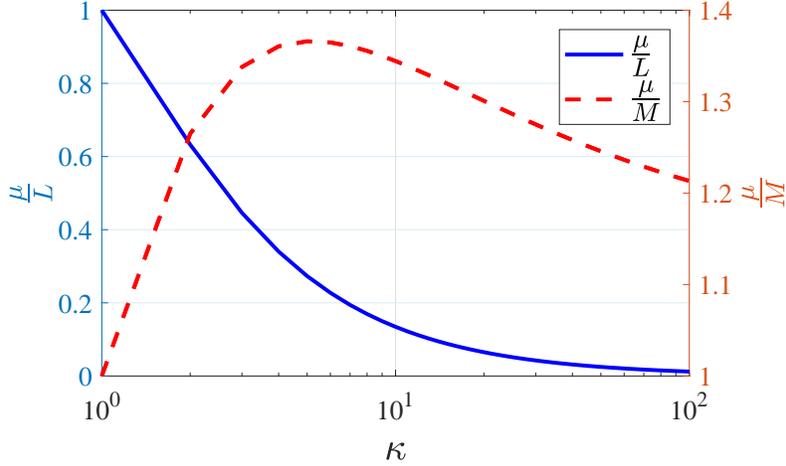}
  \caption{Variation of $\mu/L$ (solid blue) and $\mu/M$ (dashed red) with $\kappa$.}
  \label{Fig::mu} 
\end{figure}
\section{Continuous-Time Representation of the TM Method}
Let $t_k=k\sqrt{\alpha}$ and $y_k=Y(t_k)$ for some sufficiently smooth curve $Y(t)$. Now the Taylor series expansion at both $y_{k+1}$ and $y_{k-1}$ with the step size $\sqrt{\alpha}$ are
\begin{align}\label{Eqn:3rdTaylor1}
\begin{split}
 &y_{k+1}=Y(t_{k+1})=Y(t_k)+\Dot{Y}(t_k)\sqrt{\alpha}+\frac{1}{2}\Ddot{Y}(t_k)(\sqrt{\alpha})^2+\mathcal{O}((\sqrt{\alpha})^3), 
\end{split}
\end{align}
\begin{align}\label{Eqn:3rdTaylor2}
\begin{split}
&y_{k-1}=Y(t_{k-1})=Y(t_k)-\Dot{Y}(t_k)\sqrt{\alpha}+\frac{1}{2}\Ddot{Y}(t_k)(\sqrt{\alpha})^2+\mathcal{O}((\sqrt{\alpha})^3).
\end{split}
\end{align}
Combining \eqref{Eqn:3rdTaylor1} and \eqref{Eqn:3rdTaylor2} yields
    \begin{align}\label{Eqn:3rdTaylor3}
        Y(t_{k+1})+Y(t_{k-1})-2Y(t_k) = \alpha\ddot{Y}(t_k) + \mathcal{O}(\alpha^2). 
    \end{align}

Next, we note that we can rewrite~\eqref{methods} as 
\begin{subequations}
\begin{align}
\epsilon_{k+1}&=\epsilon_k +\beta \left(\epsilon_k-\epsilon_{k-1}\right)-\alpha\nabla f(y_k),\label{Eqn:TM1a}\\
y_k &= \epsilon_k + \gamma\left(\epsilon_k-\epsilon_{k-1}\right),\label{Eqn:TM1b}\\
x_k &= \epsilon_k + \delta\left(\epsilon_k-\epsilon_{k-1}\right).\label{Eqn:TM1c}
\end{align}
\end{subequations}
From \eqref{Eqn:TM1a}, we obtain
\begin{align}\label{Eqn:TM1a11}
\beta \left(\epsilon_{k-1}-\epsilon_k\right) + \left(\epsilon_{k+1}-\epsilon_k\right) +\alpha\nabla f(y_k) = 0.
\end{align}
Now adding and subtracting $\beta\left(\epsilon_{k+1}-\epsilon_k\right)$  and dividing both sides of \eqref{Eqn:TM1a11} with ${\beta}{\alpha}$ yields 
\begin{align*}
\frac{\left(\epsilon_{k+1}\!+\!\epsilon_{k-1}\!-2\epsilon_k\right)}{\alpha} \!+\! \frac{(1-\beta)}{\beta\alpha} \left(\epsilon_{k+1}-\epsilon_k\right) \!+\!\frac{1}{\beta}\nabla f(y_k) = 0.
\end{align*}

Substituting $\epsilon_k=\varepsilon(t_k)$ and $y_k=Y(t_k)$ and \eqref{Eqn:3rdTaylor3} yields
\begin{align}\label{Eqn:HB2}
&\ddot{\varepsilon}(t_k) + \mathcal{O}(\alpha) + \frac{1-\beta}{\beta\sqrt{\alpha}}\left( \dot{\varepsilon}(t_k)+\frac{1}{2}\Ddot{\varepsilon}(t_k)\sqrt{\alpha}+\mathcal{O}({\alpha}) \right)+\frac{1}{\beta} \nabla f(Y(t_k)) = 0,
\end{align}
where we used 
$$\varepsilon(t_{k+1})-\varepsilon(t_k) = \dot{\varepsilon}(t_k)\sqrt{\alpha}+\frac{1}{2}\Ddot{\varepsilon}(t_k)(\sqrt{\alpha})^2+\mathcal{O}((\sqrt{\alpha})^3).$$
If we consider the limit of~\eqref{Eqn:HB2}, when $\alpha \to 0$, we then obtain the low-resolution representation for the TM algorithm as 
\begin{align}\label{Eqn:LowResTM}
     \ddot{\varepsilon}(t_k) + 2\sqrt{
     \mu}\, \dot{\varepsilon}(t_k)  +  \nabla f(Y(t_k)) = 0,
\end{align}
where we used the relation $\beta=\displaystyle\frac{1-\sqrt{\mu\alpha}}{1+\sqrt{\mu\alpha}}$. The low-resolution representation in \eqref{Eqn:LowResTM} is exactly the same as the low-resolution ODE obtained for the NAG and heavy ball methods in~\cite{BS:18}. Therefore the low-resolution ODE fails to distinguish the TM method from the NAG and heavy ball methods. Next, we derive a high-resolution ODE that captures the characteristics of the TM method, i.e., shows a faster convergence in comparison to the NAG and heavy ball methods.

\subsection{High-resolution ODE of TM Method}
We obtain a high-resolution ODE for the TM method by ignoring $\mathcal{O}(\alpha)$ terms but keeping $\sqrt{\alpha}$ in~\eqref{Eqn:HB2}, which results~in 
\begin{align}\label{Eqn:HB3}
    \frac{1+\beta}{2\beta}\ddot{\varepsilon}(t_k) + \frac{1-\beta}{\beta\sqrt{\alpha}} \dot{\varepsilon}(t_k)  + \frac{1}{\beta} \nabla f(Y(t_k)) = 0.
\end{align}
Now multiplying both sides of~\eqref{Eqn:HB3} by $\displaystyle\frac{2\beta}{1+\beta}$ and substituting \eqref{Eqn:mu} yields
\begin{align*}
    \ddot{\varepsilon}(t_k) + 2\sqrt{\mu} \dot{\varepsilon}(t_k)  + \left(1+\sqrt{\mu\alpha}\right) \nabla f(Y(t_k)) = 0,
\end{align*}
where we used $\displaystyle \frac{2}{1+\beta} = 1+\sqrt{\mu\alpha}$. 
Next, we note that from \eqref{Eqn:3rdTaylor2} we have  
\begin{align}\label{Eqn:DiffEpsilon}
    \varepsilon(t_k) - \varepsilon(t_{k-1}) = \dot{\varepsilon}(t_k)\sqrt{\alpha} + \mathcal{O}(\alpha).
\end{align}
Ignoring the $\mathcal{O}(\alpha)$ term and substituting \eqref{Eqn:DiffEpsilon} into \eqref{Eqn:TM1b} yields $ Y=\varepsilon+\sqrt{\alpha}\gamma\dot{\varepsilon}$.
Let $x_k=X(t_k)$. Similarly, from \eqref{Eqn:TM1c} we have $X=\varepsilon+\sqrt{\alpha}\delta\dot{\varepsilon}$. Thus, we obtain 
\begin{subequations}\label{eq::TM_ODE}
\begin{align}
 & \ddot\varepsilon+2\sqrt{\mu}\,\dot{\varepsilon}+(1+\sqrt{\mu\alpha})\nabla f(Y)=0,\label{eq::ODE_1}\\
 &     Y=\varepsilon+\sqrt{\alpha}\gamma\,\dot{\varepsilon},\label{eq::ODE_2}\\
 &   X=\varepsilon+\sqrt{\alpha}\delta\,\dot{\varepsilon}.\label{eq::ODE_3}
\end{align}
\end{subequations}
 as a high-resolution ODE that maintains the main characteristics of the TM method with the appropriate initial conditions $\varepsilon_0$ and $Y_0$. Note that differentiating \eqref{eq::ODE_1} yields
\begin{align}
    \dddot\varepsilon+2\sqrt{\mu}\ddot{\varepsilon}+(1+\sqrt{\mu\alpha})\nabla^2 f(Y)\dot{Y}=0.\label{eq::ODE_1a}
\end{align}
Then, substituting \eqref{eq::ODE_2}, and its first and second derivative  $\dot{Y}=\dot{\varepsilon}+\sqrt{\alpha}\gamma\ddot{\varepsilon}$ and $\ddot{Y}=\ddot{\varepsilon}+\sqrt{\alpha}\gamma\dddot{\varepsilon}$ into \eqref{eq::ODE_1a} yields the high-resolution representation of the TM method in terms of output $Y$ as
\begin{align}\label{eq::ODE_Y}
\begin{split}
    &\ddot{Y}+2\sqrt{\mu}\dot{Y}\!+\!\gamma(1+\sqrt{\mu\alpha})\sqrt{\alpha}\nabla^2f(Y)\Dot{Y}+(1+\sqrt{\mu\alpha})\nabla f(Y)=0.
    \end{split}
\end{align}

In what follows,  we use ~\eqref{eq::ODE_Y} to analyze the stability and convergence of the ODE representation of the TM method in~\eqref{eq::TM_ODE} and compare its rate of convergence to that of the high-resolution ODE representation of the NAG method given in~\cite{BS:18} as
\begin{align}\label{eq::ODE_Y_NAG}
    &\!\!\!\ddot{Y}\!+\!2\sqrt{M}\dot{Y}+\sqrt{s}\nabla^2f(Y)\Dot{Y}\!+\!(1+\sqrt{M s})\nabla f(Y)\!=\!0,
\end{align}
where $s=\frac{1}{L}$. One can think of $s$ as the equivalent of $\alpha$ in the TM method~\eqref{methods}, i.e., it is the step-size multiplying the gradient term. In comparing the TM method to the  NAG method, it is interesting to recall~\eqref{eq::mu_bound_M}. It is important to note that the main difference between the NAG method given in \eqref{eq::ODE_Y_NAG} and the TM methods in \eqref{eq::ODE_Y} is in the coefficient multiplying the gradient correction term $\nabla^2f(Y)\Dot{Y}$. Even though it is not discussed in~\cite{BS:18}, it is worth mentioning that by introducing an appropriate intermediate variable similar to~\eqref{eq::ODE_2}, one can write the  NAG method in an equivalent form that does not require $\nabla^2f(Y)$. 

In the ODE representation of the TM and NAG algorithms we also refer to the parameters $\alpha$ and $s$ as~stepsize.
\section{Convergence Analysis}
In this section, we present a detailed convergence analysis of~\eqref{eq::TM_ODE} and establish the convergence rate of the algorithm. We start by   identifying the equilibrium point of~\eqref{eq::TM_ODE}.

\begin{lem}[Equilibrium Point of~\eqref{eq::TM_ODE}] \label{lem::eq}
Assume $f$ is strongly convex and continuously differentiable. Then,~\eqref{eq::ODE_1} has a unique equilibrium point given by  
\begin{align*}
    s_{\textup{eq}}=\big\{(\dot{\varepsilon},\varepsilon)\in\real^{n}\times \real^n\,|\,\dot{\varepsilon} = 0, \nabla f(\varepsilon)=0\big\}.
\end{align*}
Moreover, $Y$ and $X$ at the equilibrium point both satisfy $\nabla f(Y_{\textup{eq}})=\nabla f(X_{\textup{eq}})=0$. 
\end{lem}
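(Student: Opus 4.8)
The plan is to recast the second-order ODE~\eqref{eq::ODE_1} as a first-order system in the state $(\varepsilon,\dot{\varepsilon})$ and then characterize the points at which the associated vector field vanishes. Introducing $v=\dot{\varepsilon}$, equation~\eqref{eq::ODE_1} becomes the system $\dot{\varepsilon}=v$ and $\dot{v}=-2\sqrt{\mu}\,v-(1+\sqrt{\mu\alpha})\nabla f(Y)$, with $Y=\varepsilon+\sqrt{\alpha}\gamma\,v$ read off from~\eqref{eq::ODE_2}. By definition, an equilibrium is a state at which both $\dot{\varepsilon}=0$ and $\dot{v}=0$, so the task reduces to solving these two stationarity conditions.

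First I would impose the equilibrium conditions. Since $\dot{\varepsilon}=v$, requiring $\dot{\varepsilon}=0$ forces $v=0$. Substituting $v=0$ into $Y=\varepsilon+\sqrt{\alpha}\gamma\,v$ shows that $Y=\varepsilon$ at any equilibrium, so the gradient term in the dynamics is evaluated at $\varepsilon$ itself. The remaining condition $\dot{v}=0$ then reduces to $(1+\sqrt{\mu\alpha})\nabla f(\varepsilon)=0$; note also that $\ddot{\varepsilon}=\dot{v}=0$ is automatic. Because $\mu>0$ by~\eqref{eq::mu-L} and $\alpha>0$, the scalar $1+\sqrt{\mu\alpha}$ is strictly positive and can be divided out, leaving $\nabla f(\varepsilon)=0$. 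This shows every equilibrium lies in the claimed set $s_{\textup{eq}}$, and conversely any $(\dot{\varepsilon},\varepsilon)$ with $\dot{\varepsilon}=0$ and $\nabla f(\varepsilon)=0$ makes the vector field vanish.

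To upgrade ``set of equilibria'' to ``unique equilibrium point,'' I would invoke strong convexity. The minimizer lemma (equivalently, inequality~\eqref{eq::MCon-2}) guarantees that $\nabla f(\varepsilon)=0$ has exactly one solution $\varepsilon=\mathsf{x}^\star$: if $\nabla f(\varepsilon_1)=\nabla f(\varepsilon_2)=0$, then~\eqref{eq::MCon-2} yields $M\|\varepsilon_1-\varepsilon_2\|^2\le 0$, forcing $\varepsilon_1=\varepsilon_2$. Hence $s_{\textup{eq}}$ collapses to the single point $(0,\mathsf{x}^\star)$. Finally, for the moreover claim, substituting $\dot{\varepsilon}=0$ into~\eqref{eq::ODE_2} and~\eqref{eq::ODE_3} gives $Y_{\textup{eq}}=X_{\textup{eq}}=\varepsilon_{\textup{eq}}$, so $\nabla f(Y_{\textup{eq}})=\nabla f(X_{\textup{eq}})=\nabla f(\varepsilon_{\textup{eq}})=0$.

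I do not anticipate a genuine obstacle here; the statement is essentially bookkeeping on the vector field. The one step that needs care is recognizing that the gradient in~\eqref{eq::ODE_1} is evaluated at $Y$ rather than $\varepsilon$, so the argument hinges on the observation that $\dot{\varepsilon}=0$ collapses $Y$ (and $X$) onto $\varepsilon$; without it the gradient condition would not immediately translate into a condition on $\varepsilon$. The only other point worth flagging is the strict positivity of $1+\sqrt{\mu\alpha}$, which is precisely what permits isolating the gradient term and is guaranteed by $\mu\in(0,L]$ and $\alpha>0$.
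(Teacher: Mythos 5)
Your proposal is correct and follows essentially the same route as the paper: set the velocity and acceleration to zero, observe that \eqref{eq::ODE_2} and \eqref{eq::ODE_3} then force $Y_{\textup{eq}}=X_{\textup{eq}}=\varepsilon_{\textup{eq}}$, and read off $\nabla f(\varepsilon_{\textup{eq}})=0$ from \eqref{eq::ODE_1}. Your explicit uniqueness argument via \eqref{eq::MCon-2} and your remark on the positivity of $1+\sqrt{\mu\alpha}$ are welcome details that the paper leaves implicit, but they do not change the approach.
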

\begin{proof}
To obtain the equilibrium point of~\eqref{eq::ODE_1}, we set $\ddot{\varepsilon}\!=\!\dot{\varepsilon} \!=\! 0$. Then, it follows from~\eqref{eq::ODE_2} and~\eqref{eq::ODE_3} that $\varepsilon_{\textup{eq}} = Y_{\textup{eq}}\!=\! X_{\textup{eq}}$. As a result, at the equilibrium point,  from~\eqref{eq::ODE_1} we obtain $ \nabla f(\varepsilon_{\textup{eq}})\!=\!0$, and thereby  $\nabla f(Y_{\textup{eq}})\!=\!\nabla f(X_{\textup{eq}})\!=\!0$.
\end{proof}

The next result establishes exponential stability of~\eqref{eq::TM_ODE} and gives an estimate on its rate of convergence.  

\begin{thm}[Stability and convergence analysis the ODE TM]\label{thm::main}
Consider the optimization problem~\eqref{Eqn:Opt} and the algorithm~\eqref{eq::TM_ODE}. (a) For  $\alpha,\beta,\gamma,\delta\in\real_{>0}$, $\beta\neq1$, starting from any initial condition $\varepsilon(0),\dot{\varepsilon}(0)\in\real^n$ the trajectories of $t\mapsto\varepsilon$, $t\mapsto X$ and $t\mapsto Y$ converge exponentially fast 
to $\mathsf{x}^\star$, the minimizer of~\eqref{Eqn:Opt}. Moreover, $f(Y)-f(\mathsf{x}^\star)$ vanishes exponentially with a rate no worse than  $ p^\star$
where
\begin{align}\label{eq::p_TM}
&p^\star=\max_{\phi\in\real_{>0}} p(\phi),\\
&p(\phi) = \min {\small\left\{{\frac{\sqrt{\mu}}{2},\frac{3L}{4\kappa(1+\phi)\sqrt{\mu}},\frac{1}{\gamma\sqrt{\alpha}(1+\frac{1}{\phi})},\frac{4\sqrt{\mu}}{3+\frac{2}{\phi}}}\right\}}.\nonumber
\end{align}
(b) If $\alpha,\beta,\gamma,\delta>0$ is set to the parameters of the TM method in~\eqref{eq::TM_param} and the algorithm is initialized at 
\begin{subequations}\label{eq::initial_TM}
\begin{align}
\varepsilon(0) &= Y_0-\frac{\alpha\gamma^2(1+\sqrt{\mu\alpha})\nabla f(Y_0)}{(1-2\gamma\sqrt{\mu\alpha})},\\
    \dot{\varepsilon}(0) &= \frac{\sqrt{\alpha}\gamma(1+\sqrt{\mu\alpha})\nabla f(Y_0)}{(1-2\gamma\sqrt{\mu\alpha})},
\end{align}
\end{subequations}
 where $Y_0 = Y(0) \in\real^n$, then  the trajectory $t\mapsto Y$ of~\eqref{eq::TM_ODE} satisfies 
\begin{align*}
     f(Y(t))-f(\mathsf{x}^\star)\leq \frac{1.5\|Y_0-\mathsf{x}^\star\|}{\alpha}^2\textup{e}^{-p_{_{\textup{TM}}}^\star t},\qquad t\in\real_{\geq0},
\end{align*}
where $p_{_{\textup{TM}}}$ is $p^\star$, evaluated at $\mu$ given by~\eqref{eq::mu_kappa_L}, and   $\alpha$ and $\gamma$ of the TM method. 
\end{thm}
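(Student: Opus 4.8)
The plan is to prove the two parts of Theorem~\ref{thm::main} using a carefully constructed Lyapunov function for the high-resolution ODE in the output form~\eqref{eq::ODE_Y}. For part~(a), I would introduce an energy-type Lyapunov candidate of the form
\begin{align*}
V(Y,\dot Y) = \bigl(f(Y)-f(\mathsf{x}^\star)\bigr) + \tfrac{1}{2}\|a\,\dot Y + b\,(Y-\mathsf{x}^\star)\|^2 + c\,\|Y-\mathsf{x}^\star\|^2,
\end{align*}
with positive constants $a,b,c$ (to be chosen in terms of $\sqrt{\mu}$, $\gamma\sqrt{\alpha}$, and the parameter $\phi$ that appears in~\eqref{eq::p_TM}). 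The role of $\phi$ strongly suggests that a Young-type splitting, $2uv \le \phi u^2 + \tfrac{1}{\phi}v^2$, is applied to the cross terms, which is exactly where the first and third arguments of the $\min$ in $p(\phi)$ (containing $(1+\phi)$ and $(1+\tfrac1\phi)$) originate. First I would differentiate $V$ along trajectories of~\eqref{eq::ODE_Y}, substitute $\ddot Y$ from the ODE to eliminate second derivatives, and then bound each indefinite term using the strong-convexity inequalities~\eqref{eq::MCon} and the Lipschitz inequalities~\eqref{eq::Lip}, together with $\mu\in(0,L]$ from~\eqref{eq::mu-L}. The target is an inequality $\dot V \le -2p(\phi)\,V$, whose four constituent rate bounds assemble into the four entries of the $\min$; taking the best $\phi$ then yields $\dot V \le -2p^\star V$, and Gr\"onwall gives exponential decay of $V$, hence of $f(Y)-f(\mathsf{x}^\star)$ and of $\|Y-\mathsf{x}^\star\|$. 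Exponential convergence of $\varepsilon$ and $X$ then follows from~\eqref{eq::ODE_2}--\eqref{eq::ODE_3}, since $Y-\varepsilon = \sqrt{\alpha}\gamma\dot\varepsilon$ and $X-\varepsilon=\sqrt{\alpha}\delta\dot\varepsilon$ both vanish once $\dot\varepsilon\to 0$; Lemma~\ref{lem::eq} identifies the common limit as $\mathsf{x}^\star$.

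For part~(b), I would specialize the generic bound from part~(a) to the TM parameter set~\eqref{eq::TM_param}, so that $\mu$ takes its explicit value~\eqref{eq::mu_kappa_L} and the rate becomes $p_{_{\textup{TM}}}^\star$. The specific initial conditions~\eqref{eq::initial_TM} are engineered so that the Lyapunov function is controlled purely by $\|Y_0-\mathsf{x}^\star\|$ at $t=0$: substituting~\eqref{eq::initial_TM} into the definitions $Y=\varepsilon+\sqrt\alpha\gamma\dot\varepsilon$ should make $Y(0)=Y_0$ consistent and make the velocity term $\dot Y(0)$ expressible through $\nabla f(Y_0)$, which by the Lipschitz bound $\|\nabla f(Y_0)\|\le L\|Y_0-\mathsf{x}^\star\|$ (from~\eqref{eq::MCon-3}/\eqref{eq::Lip}) collapses $V(0)$ into a multiple of $\|Y_0-\mathsf{x}^\star\|^2/\alpha^2$. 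Matching the surviving constant to $1.5$ then yields the stated estimate. I would verify the algebra showing the chosen $\varepsilon(0),\dot\varepsilon(0)$ are precisely the ones annihilating the cross term $a\dot Y+b(Y-\mathsf{x}^\star)$ up to the $\nabla f(Y_0)$ contribution, which is what keeps the prefactor clean.

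The main obstacle I anticipate is the sign-definiteness bookkeeping in $\dot V$: the gradient-correction term $\gamma(1+\sqrt{\mu\alpha})\sqrt\alpha\,\nabla^2 f(Y)\dot Y$ in~\eqref{eq::ODE_Y} contributes a term $\dot Y^\top\nabla^2 f(Y)\dot Y\ge 0$ that helps dissipation, but it couples with the cross term through the Hessian in a way that is not obviously bounded below without invoking strong convexity carefully; simultaneously, the potential term $f(Y)-f(\mathsf{x}^\star)$ must be related to both $\|Y-\mathsf{x}^\star\|^2$ (lower bound, via $M$-strong convexity) and to $\nabla f(Y)^\top(Y-\mathsf{x}^\star)$ (via~\eqref{eq::MCon-1}--\eqref{eq::MCon-2}). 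Orchestrating these so that all four rate candidates emerge cleanly—and in particular checking that the coefficient conditions forcing $V$ to be positive definite are compatible with the ones forcing $\dot V\le -2p(\phi)V$—is the delicate part. I would isolate this by first writing $\dot V$ as an explicit quadratic form in $(\dot Y,\,Y-\mathsf{x}^\star,\,\nabla f(Y))$ and then imposing the Young-splitting with parameter $\phi$ term-by-term, deferring the final maximization over $\phi$ to the end.
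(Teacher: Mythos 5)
Your overall strategy (Lyapunov function, Young splitting with parameter $\phi$, Gr\"onwall, then specialization of the initial data for part (b)) is the same as the paper's, but your Lyapunov ansatz is missing the one ingredient that makes the argument work. The paper uses
\begin{align*}
V=(1+\sqrt{\mu\alpha})\bigl(f(Y)-f(\mathsf{x}^\star)\bigr)+\tfrac{1}{4}\|\dot{Y}\|^2+\tfrac{1}{4}\bigl\|\dot{Y}+2\sqrt{\mu}\,(Y-\mathsf{x}^\star)+\gamma(1+\sqrt{\mu\alpha})\sqrt{\alpha}\,\nabla f(Y)\bigr\|^2,
\end{align*}
i.e.\ the mixed squared norm contains a \emph{gradient-correction term} $\gamma(1+\sqrt{\mu\alpha})\sqrt{\alpha}\,\nabla f(Y)$. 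Its time derivative produces exactly $\gamma(1+\sqrt{\mu\alpha})\sqrt{\alpha}\,\nabla^2 f(Y)\dot{Y}$, which cancels the Hessian-damping term of~\eqref{eq::ODE_Y} inside the cross product with $Y-\mathsf{x}^\star$; what survives is the benign $-\tfrac12\gamma(1+\sqrt{\mu\alpha})\sqrt{\alpha}\,\dot{Y}^\top\nabla^2 f(Y)\dot{Y}\le 0$ plus a clean $-\tfrac12\gamma(1+\sqrt{\mu\alpha})^2\sqrt{\alpha}\,\|\nabla f(Y)\|^2$. Your candidate $\,f+\tfrac12\|a\dot Y+b(Y-\mathsf{x}^\star)\|^2+c\|Y-\mathsf{x}^\star\|^2$ has no $\nabla f(Y)$ inside the norm, so the term $(Y-\mathsf{x}^\star)^\top\nabla^2 f(Y)\dot{Y}$ remains in $\dot V$ with no cancellation; you correctly flag this as the "delicate part," but your ansatz does not resolve it, and bounding that term crudely via $\|\nabla^2 f\|\le L$ will not reproduce the stated rate. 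In particular the third entry $\frac{1}{\gamma\sqrt{\alpha}(1+1/\phi)}$ of $p(\phi)$ arises from matching the $\|\nabla f(Y)\|^2$ contribution in the \emph{upper bound} on $V$ (which only exists because $\nabla f(Y)$ sits inside the squared norm) against the $\|\nabla f(Y)\|^2$ dissipation in $\dot V$; with your $V$ there is nothing to match, so the four-term $\min$ cannot emerge.

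Two smaller points. First, your argument that $\varepsilon$ and $X$ converge "once $\dot\varepsilon\to0$" is circular as stated: you must first establish that $\dot\varepsilon\to0$ exponentially. The paper does this by noting that $\eta=\dot\varepsilon$ obeys $\dot\eta=-2\sqrt{\mu}\,\eta-(1+\sqrt{\mu\alpha})\nabla f(Y)$, an exponentially stable linear system driven by an exponentially vanishing input (input-to-state stability); an alternative is to read~\eqref{eq::ODE_2} as a stable first-order filter for $\varepsilon$ driven by $Y$. Either way the step needs to be supplied. Second, in part (b) the initial conditions~\eqref{eq::initial_TM} are chosen to give $Y(0)=Y_0$ and $\dot Y(0)=0$ (not to annihilate the cross term, which at $t=0$ equals $2\sqrt{\mu}(Y_0-\mathsf{x}^\star)+\gamma(1+\sqrt{\mu\alpha})\sqrt{\alpha}\nabla f(Y_0)$); the prefactor then comes from bounding $V(0)$ by $\bigl(\tfrac{L}{2}+\tfrac{2\mu}{1+\sqrt{\mu\alpha}}+\tfrac{\gamma^2(1+\sqrt{\mu\alpha})\alpha L^2}{2}\bigr)\|Y_0-\mathsf{x}^\star\|^2$ and evaluating this coefficient with the TM parameters, yielding $1.5/\alpha$ (not $1/\alpha^2$) times $\|Y_0-\mathsf{x}^\star\|^2$.
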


\begin{proof}
We first note that given $\beta\neq 1$, by definition~\eqref{Eqn:mu}, we have $\mu\in\real_{>0}$.
Next, recall~\eqref{eq::ODE_Y}, the equivalent ODE representation of~\eqref{eq::TM_ODE} in terms of output $Y$. As shown in Lemma~\ref{lem::eq}, the equilibrium point $Y_{\textup{eq}}$ of~\eqref{eq::ODE_Y} satisfies $Y_{\textup{eq}}=\mathsf{x}^{\star}$. To study the convergence of~\eqref{eq::ODE_Y} to $\mathsf{x}^\star$, we consider the radially unbounded Lyapunov function candidate 
\begin{align}\label{eq::v}
    V(t)=&(1+\sqrt{\mu\alpha})(f(Y)-f(\mathsf{x}^\star))+\frac{1}{4}\|\dot{Y}\|^2+\frac{1}{4}\|\dot{Y}+ 2\sqrt{\mu}(Y-\mathsf{x}^\star)+\gamma(1+\sqrt{\mu\alpha})\sqrt{\alpha}\,\nabla f(Y)\|^2.
\end{align}
Here note that by definition of $\mathsf{x}^\star$, $f(\mathsf{x}^\star)\leq f(Y)$, with equality holding only at $Y=\mathsf{x}^\star$. 
Thus, $V(t)$ is positive everywhere, and zero only at $Y=\mathsf{x}^\star$ and $\dot{Y}=0$. The derivative  of Lyapunov function~\eqref{eq::v} along the trajectories $t\mapsto Y$ of~\eqref{eq::ODE_Y} (or equivalently~\eqref{eq::TM_ODE}) is 
\begin{align*}
\begin{split}
    &\dot{V}(t)=
     -\sqrt{\mu}\|\dot{Y}\|^2 - \frac{1}{2} \gamma(1+\sqrt{\mu\alpha})\sqrt{\alpha} \dot{Y}^\top \nabla^2f(Y)\dot{Y} 
    \\&\qquad\qquad\qquad\qquad- \sqrt{\mu}(1+\sqrt{\mu\alpha})\nabla f(Y)^\top(Y-\mathsf{x}^\star) 
    \\&\qquad\qquad\qquad\qquad- \frac{1}{2}\gamma(1+\sqrt{\mu\alpha})^2\sqrt{\alpha}\nabla f(Y)^\top\nabla f(Y).
\end{split}
\end{align*}
To show that $\dot{V}<0$ everywhere except at $Y=\mathsf{x}^\star$ and $\dot{Y}=0$, we consider the following relations. First, we note that it follows from~\eqref{eq::MCon-1} and~\eqref{eq::MCon-3} that
\begin{align*}
\begin{split}
  \left(1+\sqrt{\mu\alpha}\right)\nabla f(Y)^\top(Y-\mathsf{x}^\star) 
  &= \frac{(1+\sqrt{\mu\alpha})}{2}\nabla f(Y)^\top(Y-\mathsf{x}^\star) +\frac{1}{2}\nabla f(Y)^\top(Y-\mathsf{x}^\star)\\
  & \geq\frac{(1+\sqrt{\mu\alpha})}{2}\Big(\big( f(Y)-f(\mathsf{x}^\star)\!+\!\frac{M}{2}\| Y-\mathsf{x}^\star \|^2\Bigr)+ \\
  &\frac{M}{2} \| Y-\mathsf{x}^\star \|^2\geq \frac{1+\sqrt{\mu\alpha}}{2}\left( f(Y)-f(\mathsf{x}^\star) \right) + \frac{3M}{4} \| Y-\mathsf{x}^\star \|^2.
\end{split}
\end{align*}
Thus, we have 
\begin{align*}
    &\dot{V}(t) \leq  - \frac{\sqrt{\mu}}{2} \left(1+\sqrt{\mu\alpha}\right) \left( f(Y)-f(\mathsf{x}^\star) \right) -\sqrt{\mu}\|\dot{Y}\|^2
    - \frac{3M}{4} \sqrt{\mu} \| Y-\mathsf{x}^\star \|^2
   - \frac{1}{2}\gamma\sqrt{\alpha} \left(1+\sqrt{\mu\alpha}\right)^2 \left\| \nabla f(Y) \right\|^2
 \\&\qquad\leq  -\sqrt{\mu} \Bigl( \frac{1}{2} \left(1+\sqrt{\mu\alpha}\right) \left( f(Y)-f(\mathsf{x}^\star) \right) + \|\dot{Y}\|^2
   + \frac{3M}{4}  \| Y-\mathsf{x}^\star \|^2 + \frac{1}{2}\frac{\gamma\sqrt{\alpha}}{\sqrt{\mu}} \left(1+\sqrt{\mu\alpha}\right)^2 \left\| \nabla f(Y) \right\|^2 \Bigr).
\end{align*}


Next using the Young's inequality~\cite{UHY:12} for a $\phi>0$ we write
\begin{align*}
  &\frac{1}{4}\left\|\dot{Y}+2\sqrt{\mu}(Y-\mathsf{x}^\star)+\gamma(1+\sqrt{\mu\alpha})\sqrt{\alpha}\nabla f(Y) \right\|^2 \\
   &\qquad \qquad \leq \frac{1}{4}(1+\frac{1}{\phi})\|\dot{Y}+\gamma(1+\sqrt{\mu\alpha})\sqrt{\alpha}\,\nabla f(Y)\|^2 + \frac{(1+\phi)}{4}\|2\sqrt{\mu}(Y-\mathsf{x}^\star)\|^2\\
   &\qquad \qquad \leq\frac{1}{2}(1+\frac{1}{\phi})\|\dot{Y}\|^2+\frac{\gamma^2(1+\sqrt{\mu\alpha})^2\alpha}{2}(1+\frac{1}{\phi})\|\,\nabla f(Y)\|^2 +\mu(1+\phi)\|Y-\mathsf{x}^\star\|^2.
\end{align*}
Thus, we have
\begin{align*}
\begin{split}
  &\!\!\!\!\!V(t)\leq  (1+\sqrt{\mu\alpha})(f(Y)\!-\!f(\mathsf{x}^\star))+ \frac{1}{4}(3+\frac{2}{\phi})\|\dot{Y}\|^2+\frac{\gamma^2(1\!+\!\sqrt{\mu\alpha})^2\alpha}{2}(1\!+\!\frac{1}{\phi})\|\,\nabla f(Y)\|^2\!+\!\mu(1\!+\!\phi)\|Y\!-\!\mathsf{x}^\star\|^2\!.
\end{split}
\end{align*}



Now using $p(\phi)$ given in the theorem statement, we can write 
\begin{align*}
\begin{split}
    &p(\phi)\,V(t) \leq \sqrt{\mu}\left ( \frac{1}{2}(1+\sqrt{\mu\alpha})(f(Y)-f(\mathsf{x}^\star)) + \|\dot{Y}\|^2  + \frac{3M}{4} \|Y-\mathsf{x}^\star\|^2 +  \frac{\gamma\sqrt{\alpha}}{2\sqrt{\mu}}(1+\sqrt{\mu\alpha})^2 \|\nabla f(Y)\|^2 \right ).
\end{split}
\end{align*}
Therefore, $\dot{V}(t) \leq -p(\phi)\,V(t)$, for any $\phi\in\real_{>0}$. Then, we can conclude that $t\mapsto Y$  and $t\mapsto \dot{Y}$ converge asymptotically to, respectively $\mathsf{x}^\star$ and $0$. Next, we show that this convergence is indeed exponentially fast. To this end, using the Comparison Lemma~\cite[Lemma 3.4]{HKK:02} we obtain
\begin{align}\label{eq::dot_V}
    V(t) \leq \textup{e}^{-p^\star t}\,V(0).
\end{align}
Consequently, since $f(Y(t))-f(\mathsf{x}^\star)\leq \frac{1}{1+\sqrt{\mu\alpha}}V$, starting from any initial condition, we obtain $f(Y(t))-f(\mathsf{x}^\star)\leq \frac{1}{1+\sqrt{\mu\alpha}}\textup{e}^{-p^\star t}\,V(0)$, showing that $f(Y(t))-f(\mathsf{x}^\star)$ vanishes exponentially with a rate no worse than $p^\star$. Next, using~\eqref{eq::Lip-3} we note that (recall $\nabla f(\mathsf{x}^\star)=0$)
$$\frac{1}{2L}\|\nabla f(Y(t))\|^2\!\leq \! f(Y(t))-f(\mathsf{x}^\star)\!\leq\! \frac{1}{1\!+\!\sqrt{\mu\alpha}}\textup{e}^{-p^\star t}V(0), $$
which indicates that $\nabla f(Y(t))$ converges exponentially to zero. On the other hand, using~\eqref{eq::MCon-3} we can write 
\begin{align*}
  &  \frac{M^2}{2L}\|Y-\mathsf{x}^\star\|^2\leq\frac{1}{2L}\|\nabla f(Y(t))\|^2\leq f(Y(t))-f(\mathsf{x}^\star),
    \end{align*}
to conclude that $Y$ converges exponentially to $\mathsf{x}^\star$. To prove exponential convergence of $t\mapsto\varepsilon $ and $t\mapsto X$ to $\mathsf{x}^\star$ we proceed as follows. 
We let $\eta = \dot{\varepsilon}$. Now from~\eqref{eq::ODE_1}, we have 
\begin{align*} 
\dot{\eta} = - 2\sqrt{\mu} \,\eta - (1+\sqrt{\mu\alpha}) \nabla f(Y),
\end{align*}
which is an internally exponentially stable system with input $\nabla f(Y)$ driven by~\eqref{eq::ODE_Y}.  Since $\nabla f(Y)$ converges exponentially 
to zero, due to the input-to-state stability results~\cite{HKK:02}, we can conclude that $\eta$ (equivalently $\dot{\varepsilon}$) converges exponentially fast 
to $0$. As a result, it follows from~\eqref{eq::ODE_2},~\eqref{eq::ODE_3} and exponential 
convergence of $Y$ to $\mathsf{x}^\star$ that $t\mapsto X$ and $t\mapsto \varepsilon$ also converge to $\mathsf{x}^\star$, exponentially fast. 

Next, we note that under the initial condition~\eqref{eq::initial_TM}, by substitution we obtain
\begin{align*}
    Y(0) &= \varepsilon(0) + \sqrt{\alpha}\gamma\,\dot{\varepsilon}(0)=Y_0.
\end{align*}
Moreover, 
\begin{align*}
    \dot{Y}(0) &=   \dot{\varepsilon}(0) + \sqrt{\alpha}\gamma\ddot{\varepsilon}(0) \\
    &= \dot{\varepsilon}(0) - \sqrt{\alpha}\gamma \left( 2\sqrt{\mu}\dot{\varepsilon}(0)+(1+\sqrt{\mu\alpha})\nabla f(Y_0)\right)=0.
\end{align*}
Substituting the initial condition $Y(0)=Y_0$ and $\dot{Y}(0)=0$ in~\eqref{eq::v}, we get 
\begin{align}\label{eq::V_init}
V(0)=&\,(1+\sqrt{\mu\alpha})(f(Y_0)-f(\mathsf{x}^\star))+\frac{1}{4}\|2\sqrt{\mu}(Y_0-\mathsf{x}^\star)+\gamma(1+\sqrt{\mu\alpha})\sqrt{\alpha}\nabla f(Y_0)\|^2.
\end{align}
After substituting for $V(t)$ from~\eqref{eq::v} and $V(0)$ from~\eqref{eq::V_init}, it follows from~\eqref{eq::dot_V} that 
\begin{align*}
\begin{split}
&f(Y)-f(\mathsf{x}^\star)\leq \textup{e}^{-p^{\star}t}\Bigr(f(Y_0)-f(\mathsf{x}^\star)+\frac{1}{4(1\!+\!\sqrt{\mu\alpha})}\|2\sqrt{\mu}(Y_0-\mathsf{x}^\star)\!+\!\gamma(1\!+\!\sqrt{\mu\alpha})\sqrt{\alpha}\nabla f(Y_0)\|^2\Bigr).
\end{split}
\end{align*}
Note that by invoking $\|\nabla f(Y_0)\|\leq L\|Y_0-\mathsf{x}^\star\|$ and  $f(Y)-f(\mathsf{x}^\star)\leq \frac{L}{2}\|Y_0-\mathsf{x}^\star\|^2$, which hold for function $f$ by its definition, we obtain
\begin{align*}
\begin{split}
\|2\sqrt{\mu}(Y_0-\mathsf{x}^\star)+\gamma(1+\sqrt{\mu\alpha})\sqrt{\alpha}\nabla f(Y_0)\|^2 &\leq  2\left(4\mu\|Y_0-\mathsf{x}^\star\|^2+\gamma^2(1+\sqrt{\mu\alpha})^2\alpha\|\nabla f(Y_0)\|^2\right)\\
&\leq (8\mu+2\gamma^2(1+\sqrt{\mu\alpha})^2\alpha L^2)\|Y_0-\mathsf{x}^\star\|^2.
\end{split}
\end{align*}
Thus we have
\begin{align*}
& f(Y)-f(\mathsf{x}^\star)\leq\left(\frac{L}{2}+\frac{2\mu}{(1+\sqrt{\mu\alpha})}
 +\frac{\gamma^2(1+\sqrt{\mu\alpha})\alpha L^2}{2}\right)\|Y_0-\mathsf{x}^\star\|^2\textup{e}^{-p^{\star}t}.
\end{align*}
Note that using the parameters of the TM method in~\eqref{eq::TM_param} we can write
\begin{align*}
&\frac{L}{2}+\frac{2\mu}{(1+\sqrt{\mu\alpha})}+\frac{\gamma^2(1+\sqrt{\mu\alpha})\alpha L^2}{2}
\\&\qquad\qquad\quad=\frac{(1.5\rho^4+3\rho^3-3.5\rho^2-4\rho+6)}{\alpha(-\rho^3+3\rho^2-4\rho+4)}\xrightarrow[]{\rho\rightarrow1} \frac{1.5}{\alpha}.
\end{align*}
Therefore, $f(Y)-f(\mathsf{x}^\star)\leq \frac{1.5\|Y_0-\mathsf{x}^\star\|}{\alpha}^2\textup{e}^{-p_{_{\textup{TM}}}^\star t}$, which completes the proof.
\end{proof} 

Theorem~\ref{thm::main} shows that~\eqref{eq::TM_ODE} has robustness to deviations from the TM parameters. But, an interesting observation  about our rate of convergence analysis is that our simulation study of the rate $p$ in Theorem~\ref{thm::main} indicates that the best rate is obtained when we use $\alpha,\beta,\gamma$ of the TM method given in~\eqref{eq::TM_param}, see Fig.~\ref{fig::p_vs_alpha} for some example scenarios.
\begin{figure}[h!]
  \centering  
  \includegraphics[trim=10pt 33pt 0pt 10pt, clip,scale=0.75]{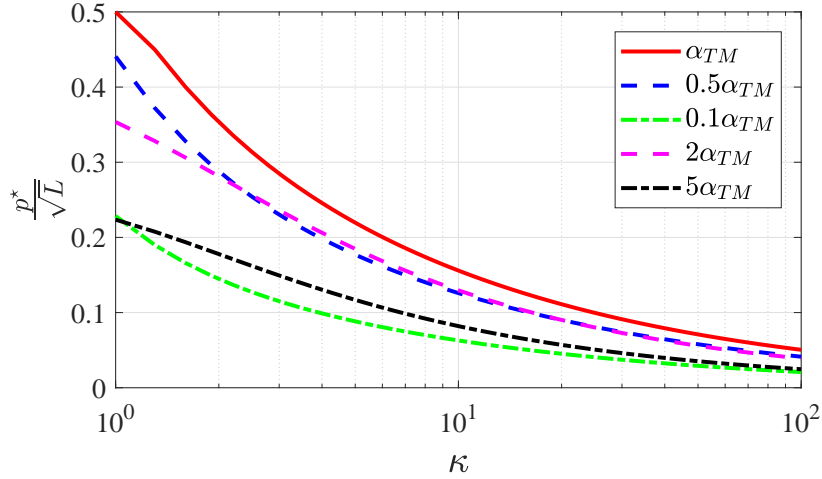}
  \caption{Variation of $p^\star/\sqrt{L}$, where $p^\star$ is given in~\eqref{eq::p_TM}, with $\kappa$ when we use $\beta$ and $\gamma$ of the TM method but implement different values for $\alpha$. $\alpha_{TM}$ corresponds to the $\alpha$ of the TM method.}
  \label{fig::p_vs_alpha}
\end{figure}

Next, we note that the rate of convergence established for the ODE representation of the  NAG method~\eqref{eq::ODE_Y_NAG} in~\cite{BS:18} is $\frac{\sqrt{M}}{4}$. Before, comparing this rate to the rate of the TM method that we established in Theorem~\ref{thm::main}, we show that a tighter bound can be indeed obtained for the NAG method. For brevity, we skip the details and comment only on the crucial steps required to establish this tighter bound. We start by using the Lyapunov candidate function $V$ defined as
\begin{align*}
  V&= (1+\sqrt{Ms})(f(Y)-f(\mathsf{x}^\star))+\frac{1}{4} \|\dot{Y}\|^2\\
  &\quad\qquad\qquad\qquad+ \frac{1}{4}\|\dot{Y}+\sqrt{s}\,\nabla f(Y)+\sqrt{M}(Y-\mathsf{x}^\star)\|^2\\
  &\leq  (1+\sqrt{Ms})(f(Y)-f(\mathsf{x}^\star))+ \frac{1}{4}(3+\frac{2}{\phi})\|\dot{Y}\|^2\\
  &\quad\qquad+\frac{s}{2}(1+\frac{1}{\phi})\|\,\nabla f(Y)\|^2+M(1+\phi)\|Y-\mathsf{x}^\star\|^2\!,
\end{align*}
where $\phi\in\real_{>0}$. The upper-bound on $V$ is established using the similar manipulations we used in the proof of Theorem~\ref{thm::main}. Now following similar steps given in the proof of Theorem~\ref{thm::main} we can show that derivative of $V$ along trajectories of ~\eqref{eq::ODE_Y_NAG} satisfies
\begin{align*}
  \dot{V}&\leq  - \frac{\sqrt{M}}{2} (1+\sqrt{Ms}) \big( f(Y)-f(\mathsf{x}^\star) \big) -\sqrt{M}\|\dot{Y}\|^2
  \\
  &\qquad\qquad\quad-\frac{3M\sqrt{M}}{4}  \| Y-\mathsf{x}^\star \|^2
   - \frac{s\sqrt{M}}{2} \left\| \nabla f(Y) \right\|^2.  
\end{align*}
Thus, for the ODE NAG method, the convergence rate $p_{_{\textup{NAG}}}^\star$ is 
\begin{align}
&p_{_{\textup{NAG}}}^\star=\max_{\phi\in\real_{>0}}p_{_{\textup{NAG}}}(\phi),\\
 & p_{_{\textup{NAG}}}(\phi) \!=\! \min{\small \left\{\!\frac{\sqrt{L}}{2\sqrt{\kappa}},\frac{3\sqrt{L}}{4\sqrt{\kappa}(1\!+\!\phi)},\!\frac{\sqrt{L}}{\sqrt{\kappa}(1\!+\!\frac{1}{\phi})},\!\frac{4\sqrt{L}}{\sqrt{\kappa}(3\!+\!\frac{2}{\phi})}\right\}}.\nonumber
\end{align}
Given $\frac{L}{\kappa}=M$, we can write  $$p_{_{\textup{NAG}}}(\phi) = \min \left\{\frac{1}{2},\frac{3}{4(1\!+\!\phi)},\frac{1}{(1\!+\!\frac{1}{\phi})},\frac{4}{(3\!+\!\frac{2}{\phi})}\right\}\sqrt{M}.$$ Figure~\ref{fig::P_NAG_phi} shows how each of the four elements varies with $\phi$ and the optimal $\phi$ for which the minimum among the four elements is at its maximum. As can be seen and also shown analytically $p_{_{\textup{NAG}}}^\star=\frac{3}{7}\sqrt{M}$ is attained at $\phi^\star=\frac{3}{4}=0.75$. Thus, $p_{_{\textup{NAG}}}^\star$ is a tighter bound than $\frac{\sqrt{M}}{4}$ that is established in~\cite{BS:18} as the rate of convergence for the ODE NAG method. On the other hand, Fig.~\ref{fig::Compare_P_TM_NAG} compares $\frac{p_{_\text{TM}}}{\sqrt{L}}$ with $\frac{p_{_\text{NAG}}}{\sqrt{L}}$ at different values of $\kappa$. As we can see the TM method attains a better convergence rate than the NAG method. In comparing the rate of convergences of the TM and  NAG methods, it is worth to remember~\eqref{eq::mu_kappa_L} and~\eqref{eq::mu-L}. It is also interesting to note that similar to the gradient descent method, the rate of convergence of the TM and the NAG methods decreases as $\kappa$ increases. Finally note that $\phi^\star$ corresponding to $p^\star_{_\text{TM}}$ can be obtained as 
$$\phi^{\star} = \frac{9L-16\mu \kappa+\sqrt{256(\mu\kappa)^2+96\mu kL+81L^2}}{32\mu k}.$$

\begin{figure}[h!]
  \begin{center} 
  \includegraphics[trim=10pt 33pt 10pt 10 ,clip,scale=0.75]{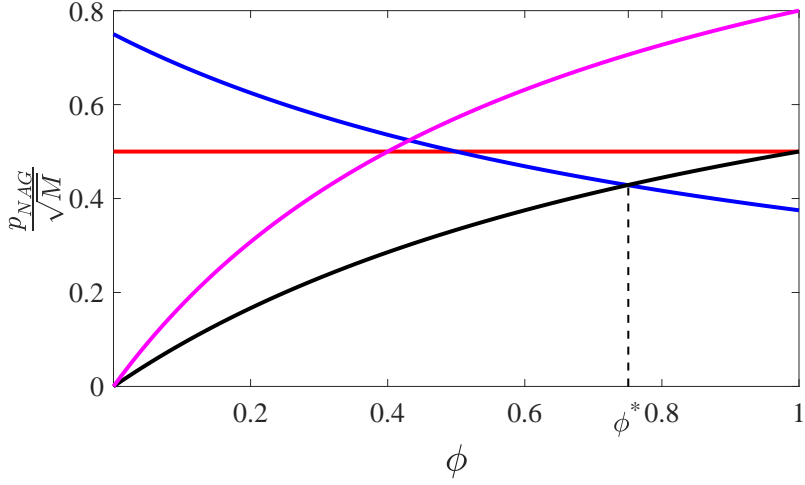}
  \end{center}
  \caption{Variation of the elements of {\small$\left\{\frac{1}{2},\frac{3}{4(1+\phi)},\frac{1}{(1+\frac{1}{\phi})},\frac{4}{(3+\frac{2}{\phi})}\right\}$} with $\phi$. $\frac{p^\star_{_\text{NAG}}}{\sqrt{M}}=\frac{3}{7}=0.4286$ is attained at $\phi^\star=0.75$. }
  \label{fig::P_NAG_phi}
\end{figure}

\begin{figure}[h!]
  \centering  
  \includegraphics[trim=10pt 33pt 0pt 10pt, clip,scale=0.75]{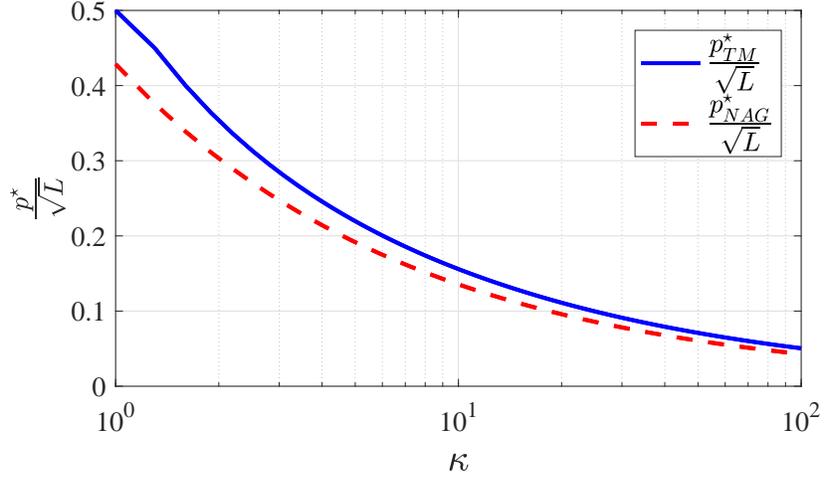}
  \caption{A comparison between the rate of convergence of the ODE representations of the TM and  the NAG methods at different values of $\kappa$.}
  \label{fig::Compare_P_TM_NAG}
\end{figure}

\subsection{Analysis via IQC}
We close this section by noting that the rate of convergence of the continuous-time TM can be also obtained using an IQC method. To this end, note that algorithm~\eqref{eq::TM_ODE} can be cast as an LTI system
\begin{subequations}\label{LTI}
\begin{align}
      \dot{\xi}(t)=A\xi(t)+Bq(t)\\
    Y(t)=C\xi(t)+Dq(t)
\end{align}
\end{subequations}
with state $\xi(t)=[\dot{\varepsilon}(t)\quad \varepsilon(t)]^{\top}\in \mathbb{R}^{2n}$, input $q(t)=\nabla f(Y(t))\in \mathbb{R}^{2n}$, and output $Y(t)\in \mathbb{R}^{2n}$, where
$A=\begin{bmatrix} 
-2\sqrt{\mu}&0\\
1 & 0\\
\end{bmatrix}\otimes I_n,\quad
B=\begin{bmatrix}
-1-\sqrt{\mu\alpha}&0\\
0&0\\
\end{bmatrix}\otimes I_n$, 
$
C=\begin{bmatrix} 
\sqrt{\alpha}\gamma& 1\\
\sqrt{\alpha}\delta&1\\
\end{bmatrix}^{\top}\otimes I_n, \quad
D=0_{2n\times 2n}
. $
When $f\in\mathcal{S}_{M,L}$, ~\cite{Lessard:18} shows that the nonlinear map $q(t)=\nabla f(Y)$ satisfies the so-called point-wise IQC condition cast as
 \begin{align}\label{eq::IQC_TM_con}
     \begin{bmatrix}
     Y-Y^\star\\\nabla f(Y)-\nabla f(Y^\star)
     \end{bmatrix}^\top {Q_f} \begin{bmatrix}
     Y-Y^\star\\\nabla f(Y)-\nabla f(Y^\star)
     \end{bmatrix}\succeq 0,
 \end{align}
where $Q_f=\begin{pmatrix}
-2ML&L+M\\
L+M&-2
\end{pmatrix}\otimes I_n$, and $Y^\star=\mathsf{x}^\star$.

\begin{rem}[An estimate on the rate of convergence of $Y$ in~\eqref{eq::TM_ODE} using an IQC based solution]\label{rem::IQC_conv}\cite{ZEN:18}
\rm{Given the point-wise IQC condition for the LTI representation of the continuous-time TM, using standard IQC stability results, the exponential convergence rate of $\|Y(t)-\mathsf{x}^\star\|$ to zero in the continuous-time TM algorithm~\eqref{eq::TM_ODE} is $p_{_\text{IQC}}$ if
\begin{align}\label{eq::LMI}
\begin{bmatrix} 
A^{\top}P\!+\!PA\!+\!p_{_\text{IQC}} P & PB \\
B^{\top}P & 0
\end{bmatrix}\!+\!\sigma
\begin{bmatrix} 
C^{\top} & 0 \\
D^{\top} & I
\end{bmatrix}Q_{f}
\begin{bmatrix} 
C & D \\
0 & I
\end{bmatrix}\!\preceq\!0
\end{align}
is feasible for some $\sigma\in\real_{\geq0}$, $p_{_\text{IQC}}\in\real_{>0}$, $P\succ0$, $P \in \reals^{n\times n}$. A tighter estimate $p^\star_{_\text{IQC}}$ on the rate of convergence can be obtained by maximizing $p_{_\text{IQC}}$ subject to~\eqref{eq::LMI}.}
 \boxend
 \end{rem}
 \begin{figure}[t]
  \centering  
  \includegraphics[trim=10pt 33pt 0pt 10pt, clip,scale=0.75]{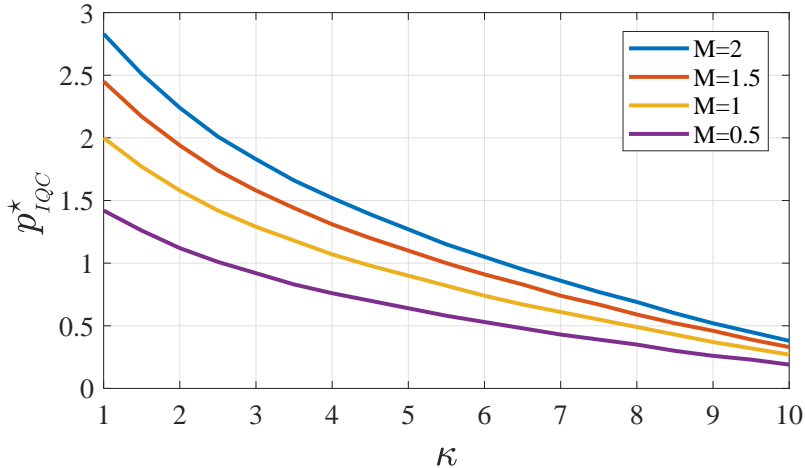}
  \caption{Convergence rate of the TM method given by the IQC method of Remark~\ref{rem::IQC_conv}.}
  \label{Fig::iqc} 
\end{figure}
Figure~\ref{Fig::iqc} shows the convergence rate $p^\star_{_\text{IQC}}$ that we get from using the IQC method of Remark~\ref{rem::IQC_conv} for various values of $M$ and $\kappa$. As we can see, the IQC approach also shows that similar to the gradient descent method, the rate of convergence of the TM method also decreases as $\kappa$ increases. We should mention here though that the IQC approach offers a sufficient condition for stability and convergence analysis, which is not guaranteed to yield a solution for every value of $M$ and $L$. 

\section{Simulation results}\label{sec:num}
\begin{figure}[ht]
  \begin{centering}
      \subfigure[When the parameters of the algorithms are set to their respective exact values]{
      \psfrag{convergence error in log scale}{\tiny{convergence error in log scale}}
      \psfrag{time}{\tiny{time}}
      \includegraphics[trim=10pt 33pt 5pt 10pt, clip,scale=0.85]{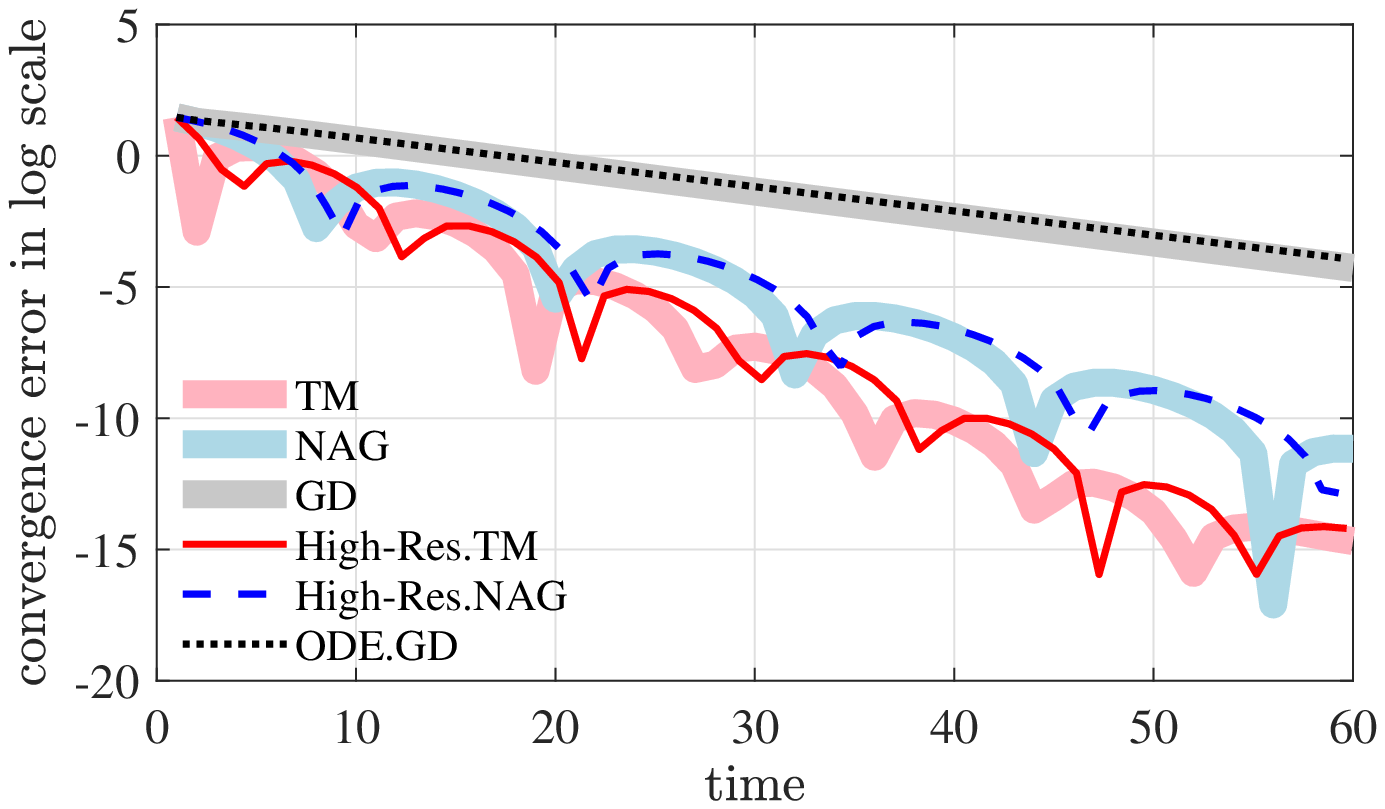}\label{Fig::exm1}}\\
      \subfigure[When the parameters of the algorithms are set to their respective exact values except for stepsizes which are scaled down by a factor of $0.3$]{
      \includegraphics[trim=10pt 33pt 5pt 10pt, clip,scale=0.85]{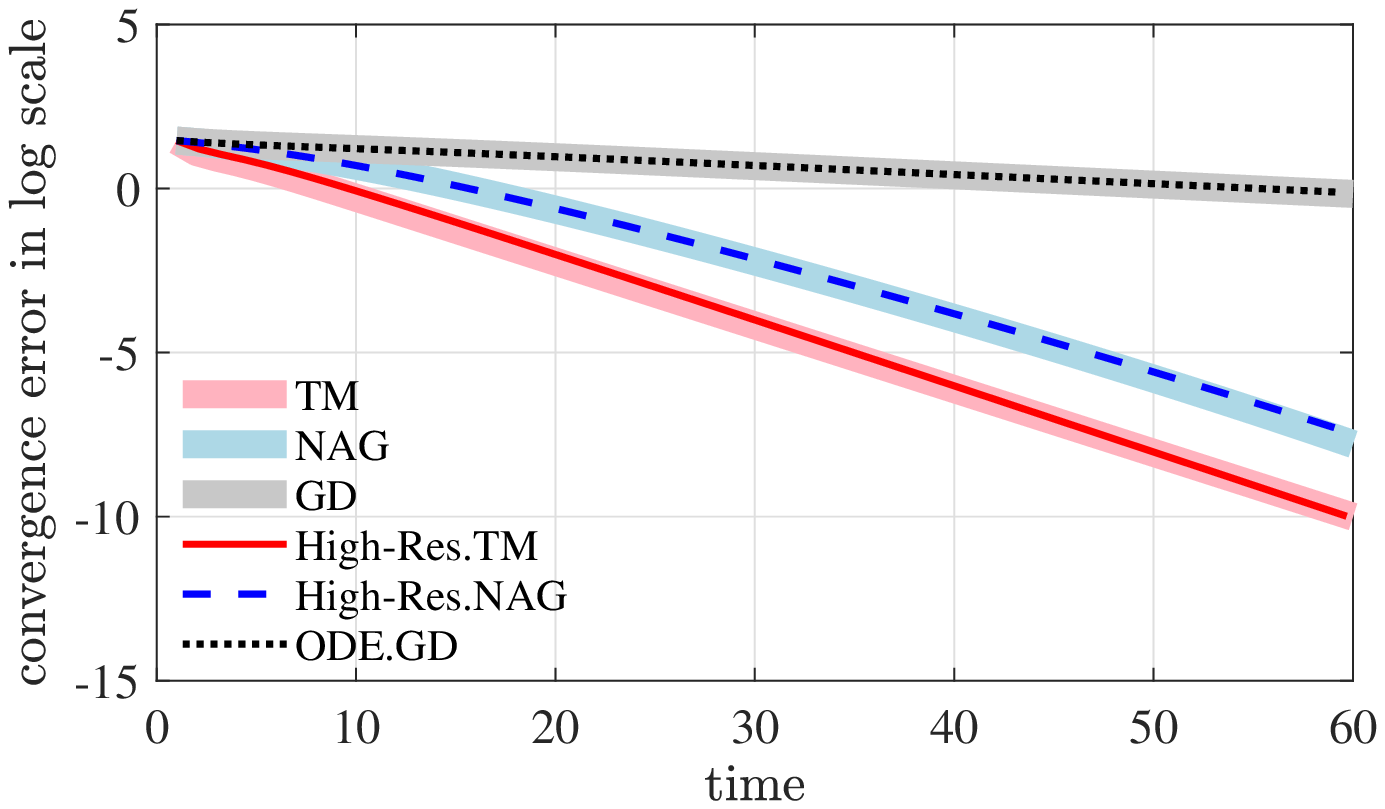}\label{Fig::exm2}}
      \caption{Convergence error for the TM, NAG and gradient descent (GD) algorithms.}
      \label{Result3}
  \end{centering}
\end{figure}
Let the cost function in~\eqref{Eqn:Opt} be given by $ f(x)=\frac{x^2}{2\,\text{log}(2+x^2)}-x$. For this cost, we have $M=0.038$ and $L=1.443$. Thus, $\kappa=37.713$.
Figure~\ref{Fig::exm1} shows the convergence error for the TM, NAG, gradient descent with stepsize $1/L$ (GD), high-resolution ODE representations  of TM~\eqref{eq::TM_ODE} and  NAG~\eqref{eq::ODE_Y_NAG} methods, and continuous-time gradient descent (ODE GD) algorithms. Figure~\ref{Fig::exm2} shows the same plot when a smaller stepsize is used for all the algorithms. As we can see in these plots, the high-resolution ODE representation of the TM algorithm closely captures the characteristics of the discrete-time TM. Moreover, we can see from the plots that for both cases, the TM algorithm converges faster than the gradient descent and the NAG methods. We can also see that using a smaller stepsize removes the oscillatory behavior that we see in the trajectories of the TM and NAG methods however as expected and predicted by our analysis using a smaller stepsize results in a slower convergence. 

\section{Conclusion}
In this paper, we have presented a second-order ODE for modeling the triple momentum method, which is considered as the fastest first-order optimization method for strongly convex functions. The proposed high-resolution ODE model has shown to accurately captures the higher-order characteristics of its discrete-time counterpart. We presented a Lyapunov analysis to prove the exponential convergence of the developed continuous-time model of the triple momentum algorithm. We compare the rate of this ODE model of the triple momentum with that of the Nesterov method and showed that the Lyapunov analysis also confirms that the triple momentum method has a faster convergence than the Nesterov method.  We also discuss how an IQC approach also can be used to obtain an estimate on the rate of convergence of the ODE representation of the triple momentum method. 
We validate our theoretical results through several numerical simulations. Since control theoretic tools in continuous-domain generally provide a convenient framework for design and analysis of algorithms, our future work includes first devising a distributed version of the continuous-time triple momentum method that can be used for distributed optimizations. Then, our objective is to discretize this algorithm to obtain an iterative solution that can be implemented over networks with wireless communication.

\bibliographystyle{IEEEtran}%
\bibliography{alias,Reference,Biblio}

\end{document}